\title{Towards Nash-Williams Orientation Conjecture for Infinite Graphs}
\author[A.~Assem]{Amena Assem}
\address{Department of Combinatorics and Optimization, University of Waterloo, ON, Canada}
\email{a36mahmo@uwaterloo.ca}
\date{}
\keywords{Nash-Williams, infinite graph, orientation, edge-connectivity, lifting}
\theoremstyle{plain}
\newtheorem{theorem}{Theorem}[section]
\newtheorem{lemma}[theorem]{Lemma}
\newtheorem{conjecture}[theorem]{Conjecture}
\theoremstyle{definition}
\newtheorem{definition}[theorem]{Definition}
\newtheorem{notation}[theorem]{Notation}
\newtheorem{case}{Case}
\begin{document}

\maketitle

\begin{abstract}
In 1960 Nash-Williams proved that an edge-connectivity of $2k$ is sufficient for a finite graph to have a $k$-arc-connected orientation. He then conjectured that the same is true for infinite graphs. In 2016, Thomassen, using his own results on the auxiliary \emph{lifting graph}, proved that $8k$-edge-connected infinite graphs admit a $k$-arc connected orientation. Here we improve this result for the class of $1$-ended locally-finite graphs and show that an edge-connectivity of $4k$ is enough in that case. Crucial to this improvement are results presented in a separate paper, by the same author of this paper, on the key concept of the lifting graph, extending results by Ok, Richter, and Thomassen.
\end{abstract}

\section{Introduction}

An orientation of a graph $G$ is \emph{$k$-arc-connected} if for any two vertices $x$ and $y$ in $G$ there are $k$ arc-disjoint directed paths from $x$ to $y$ in the oriented graph. It is clear that an edge-connectivity of $2k$ is necessary for the existence of such an orientation. Nash-Williams proved in 1960 that the condition of being $2k$-edge-connected is also sufficient for finite graphs to admit a $k$-arc-connected orientation \cite{nash1960orientations}. He claimed that the same is true for infinite graphs, but still, after more than $60$ years, the question remains open. Until 2016, it was not even known whether a function $f(k)$ of positive integers exists such that every $f(k)$-edge-connected infinite graph has a $k$-arc-connected orientation. Then Thomassen proved his breakthrough result that an edge-connectivity of $8k$ is enough \cite{thomassen2016orientations}.

\begin{conjecture}\label{conjecture of Nash Williams} (Nash-Williams \cite{nash1960orientations})
Let $k$ be a positive integer. Every $2k$-edge-connected infinite graph admits a $k$-arc-connected orientation.
\end{conjecture}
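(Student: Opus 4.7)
The plan is to follow the \emph{lifting} strategy that Nash-Williams used to settle Conjecture~\ref{conjecture of Nash Williams} in the finite case, and that Thomassen adapted to obtain the $8k$ bound for arbitrary infinite graphs. Recall that a lifting at an internal vertex $v$ replaces two edges $uv$, $vw$ by a single edge $uw$; the lifting is \emph{admissible} with respect to $k$ if it preserves $2k$-edge-connectivity between every pair of vertices other than $v$. In the finite case, one shows that at any vertex of even degree there is a \emph{complete admissible splitting}, iterates such splittings until every vertex is Eulerian, and then takes an Eulerian orientation, which is automatically $k$-arc-connected.

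The first step of the program would be to establish a sufficiently strong admissible splitting theorem at (at least) a dense set of vertices of $G$. For locally-finite graphs this is a purely structural statement about the \emph{lifting graph} $L_v$ extended in the companion paper cited in the abstract: the vertices of $L_v$ are the edges incident with $v$, adjacency in $L_v$ encodes admissibility of pairs, and what one wants is a perfect matching. The improvement from $8k$ to $4k$ for $1$-ended locally-finite graphs should rest on exactly this point, and any further improvement toward the sharp bound $2k$ will require still finer structural information about $L_v$.

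The second step would be to string the admissible liftings together in a transfinite induction across the vertex set so that in the limit every vertex has been Eulerized, and then to orient the resulting Eulerian multigraph consistently, making sure each finite cut sees at least $k$ arcs in each direction. In the $1$-ended locally-finite case the ambient compactness (via K\"onig-type arguments on increasing finite subgraphs) makes the limiting process manageable; in general one must also verify that arc-connectivity between every pair of vertices survives the limit, which for a single end can be controlled by routing through the ``far'' side of the graph.

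The main obstacle, and the reason why the full Conjecture~\ref{conjecture of Nash Williams} is out of reach, lies beyond the $1$-ended locally-finite class. Admissibility is a finite-cut condition, so in graphs with infinitely many ends, or with vertices of infinite degree, a transfinite lifting process may destroy connectivity across cuts that were never individually controlled by any single admissibility check. Closing the gap all the way down to $2k$ in that generality appears to demand a genuinely non-local device---either a direct compactness construction that produces the orientation without lifting, or a splitting theorem able to process an infinite-degree vertex or an end in one shot---and this is the step I expect to be the crux of any eventual proof of Conjecture~\ref{conjecture of Nash Williams}.
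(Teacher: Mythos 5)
The statement you were asked about is Conjecture~\ref{conjecture of Nash Williams} itself, which remains open; the paper does not prove it and neither do you. Your text is a research programme rather than a proof, and you say so explicitly at the end, so the fundamental gap is simply that no argument is supplied for the claimed statement. Within the paper, the conjecture is only attributed and motivated; the actual theorem proved (Theorem~\ref{new orientation result}) is the much weaker assertion that $4k$-edge-connected, locally-finite, $1$-ended graphs have a $k$-arc-connected orientation.

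Even as a description of the paper's partial result, your sketch diverges from what is actually done. The paper explicitly avoids Eulerizing: it states that ``Eulerian graphs will not show up in our work,'' and the lifting is not performed at ordinary vertices of $G$ but at the single contraction vertex $s$ of $G/(G-S)$ for a finite vertex set $S$, i.e.\ the lifting graph is used to pair up the edges of a finite cut $\delta(S)$, not to find a perfect matching at each vertex (indeed the relevant structural fact, Theorem~\ref{main lifting theorem}, yields either a disconnected complement or an isolated vertex plus a balanced complete bipartite graph, and in the odd-degree case three edges necessarily remain unlifted, which is why the immersed graph is only $(2k-1)$-edge-connected). The $1$-endedness is used to guarantee that the \emph{end graph} on $\delta(S)$ is connected, so that liftability on the finite side can always be reconciled with linkability on the infinite side. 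The orientation is then obtained not by an Eulerian orientation plus a limit over cuts, but by immersing a nested sequence of finite $(4k-1)$-edge-connected graphs, orienting each with Thomassen's O1/O2 algorithm (which needs $(4k-2)$-edge-connectivity, the source of the factor $4k$ rather than $2k$), and taking the union of the compatible finite orientations. So the obstruction to reaching $2k$ is not only the multi-ended or infinite-degree case you identify, but already the loss from $2k$-liftability to a $(2k-1)$-edge-connected immersion and the $(4k-2)$ threshold in the finite orientation algorithm.
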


\begin{theorem} (Thomassen \cite{thomassen2016orientations})
Let $k$ be a positive integer, and let $G$ be an $8k$-edge-connected infinite graph. Then $G$ admits a $k$-arc-connected orientation.
\end{theorem}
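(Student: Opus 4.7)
The plan is to reduce the infinite problem to repeated applications of the finite Nash-Williams theorem via a splitting-off (lifting) procedure carried out at each vertex in turn. My first step would be to reduce to the case where $G$ is countable and locally finite. For local finiteness, a vertex $v$ of infinite degree can be replaced by a locally finite gadget: partition the edges at $v$ into finite bundles of a suitable size and attach each bundle to its own auxiliary vertex joined to $v$; one checks that this transformation preserves both the hypothesis and the conclusion. For countability, a compactness argument (or downward L\"owenheim--Skolem) shows it suffices to orient all countable $8k$-edge-connected subgraphs of $G$ in a coherent way.

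Given a countable locally finite $8k$-edge-connected graph, I would enumerate its vertices $v_1, v_2, \dots$ and process them one at a time. At step $i$, I would invoke a lifting theorem to pair up the edges incident to $v_i$ so that, when each pair $(e,e')$ is \emph{lifted} (that is, replaced by a single edge joining the two other endpoints), the resulting graph $G_i$ remains $2k$-edge-connected between all pairs of remaining vertices. Each lifted pair then carries a forced orientation at $v_i$, one edge incoming and one outgoing, so the iteration simultaneously produces a partial orientation of every edge of $G$. The specific lifting theorem used is where the constant $8k$ enters: one needs enough slack in the edge-connectivity both to absorb the $2k$ required for the eventual Nash-Williams orientation and to cushion the degradation in connectivity caused by previous lifts.

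To conclude $k$-arc-connectivity of the final orientation, I would use the fact that the inverse of lifting, namely ``unlifting'' a single edge into an oriented in--out pair through an intermediate vertex, preserves $k$-arc-connectivity. Since each $G_i$ remains $2k$-edge-connected, the classical finite Nash-Williams theorem yields $k$-arc-connected orientations on any finite witness for a given pair of endpoints; piecing these together with the canonical forced orientations supplied by the lifts, and applying a K\"onig-lemma-style compactness argument over finite subgraphs, produces a $k$-arc-connected orientation of $G$.

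The hardest step, and the source of the constant $8k$ rather than $2k$, is guaranteeing that an admissible lifting can actually be performed at every vertex while simultaneously respecting infinitely many future edge-connectivity constraints. Concretely, I need the lifting graph at each $v_i$, whose vertex set is $E(v_i)$ and whose edges are the admissible pairs, to admit a perfect matching (or a structured near-matching) that remains compatible with admissibility at every subsequent $v_j$. Controlling how the choice of matching at one vertex propagates into the lifting graphs at later vertices, and proving that a high enough edge-connectivity keeps every $L_{v_i}$ dense enough throughout the process, will be the main technical obstacle; this is precisely what the structural results on the lifting graph referenced in the abstract are intended to provide.
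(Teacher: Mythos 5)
Your overall plan---split off every vertex of $G$ in turn, orient each lifted pair in--out, and recover the orientation by unlifting---is essentially Mader's finite-graph strategy, and it does not transfer to infinite graphs in the form you describe. In a finite graph one splits a vertex off completely, obtains a \emph{smaller} graph, orients it by induction, and then undoes the lifts one at a time. If you process every vertex of an infinite graph this way there is no base object left to orient: the limit of the process is degenerate, and the ``forced'' in--out orientations at each $v_i$ only control local balance at $v_i$; they say nothing about the existence of $k$ arc-disjoint directed paths between two arbitrary vertices. The way lifting is actually used in Thomassen's proof (and in the present paper) is different: one fixes a finite vertex set $A$, contracts the infinite side of a suitable finite cut to a single vertex $s$, and lifts only at $s$, pairing the cut edges so that each lifted pair can be realised by an edge-disjoint path on the infinite side. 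This yields an \emph{immersion} in $G$ of a finite, highly edge-connected graph on a vertex set containing $A$. That immersion theorem is the key idea your proposal is missing, and it is also where the constant comes from: $8k$-edge-connectivity of $G$ is what guarantees a finite Eulerian $(4k-2)$-edge-connected immersed graph, which is exactly the threshold at which the finite orientation algorithm (operations O1/O2) produces a $k$-arc-connected orientation.

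Your concluding compactness step also does not work as stated. Knowing that each finite subgraph, or each ``finite witness'' for a pair of endpoints, admits a good orientation does not produce a single orientation of $G$ in which every pair $x,y$ has $k$ arc-disjoint directed $x$--$y$ paths: the $k$ paths for a fixed pair must all lie inside one finite part whose orientation is already decided, and different witnesses may orient a common edge incompatibly. The correct device is a nested sequence of finite oriented subgraphs $W_0\subseteq W_1\subseteq\cdots$ exhausting $E(G)$, each orientation extending the previous one, with the property that any two vertices of $W_n$ are joined by $k$ arc-disjoint directed paths already inside $W_{n+1}$; this requires both the immersion theorem and the fact that a partial orientation obtained via O1/O2 on a subgraph extends to a $k$-arc-connected orientation of any $(4k-2)$-edge-connected finite supergraph. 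Finally, the opening reductions you sketch (degree gadgets for local finiteness, L\"owenheim--Skolem for countability) are themselves substantial: Thomassen devotes a separate vertex-splitting theorem to obtaining locally finite blocks while preserving edge-connectivity, and a Zorn's lemma argument for the uncountable case, and your gadget would have to be checked to preserve $8k$-edge-connectivity and to allow the orientation to be pulled back, which is not automatic.
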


To prove his theorem, Thomassen first, in the same paper, proved some results on the lifting graph, which is an auxiliary graph used in edge-connectivity proofs. Lifting, also sometimes called splitting, was studied first by Lov\'asz and Mader, and later by Frank (\cite{lovasz1976eulerian}, \cite{mader1978reduction}, \cite{frank}). More results on lifting were proved in 2016 by Ok, Richter, and Thomassen \cite{ORT2016linkages} and they used it in their proof of a theorem on edge-disjoint linkage in infinite graphs. 

Lifting at a vertex $s$ is the operation of deleting two edges incident with $s$ and adding an edge between their non-$s$ end-vertices. Careful lifting, done such that the local edge-connectivity is somewhat preserved, is what is needed. Lifting is important to proving the existence of a $k$-arc-connected orientation both in finite and infinite graphs. This can be seen in Mader's paper \cite{mader1978reduction} where he used his result on lifting to give a simpler proof of the orientation theorem of  Nash-Williams for finite graphs. 

In 2022, the author of this paper made a comprehensive study of the lifting graph in \cite{Assem2022lifting}. She will use her results to prove in this paper that an edge-connectivity of $4k$ is enough to have a $k$-arc-connected orientation in $1$-ended locally-finite graphs. The author is thankful for the several discussions and meetings with Bruce Richter trying to extend the result for graphs with multiple ends.

In our proof, we follow the same general lines of Thomassen's proof. Thomassen first proved that for a finite set of vertices $A$ in a $4k$-edge-connected locally-finite graph $G$, there is an immersion in $G$ of a finite Eulerian $2k$-edge-connected graph with vertex set $A$ (Theorem 4 in \cite{thomassen2016orientations}). We will prove a similar result in Section 3 on immersions. The property of being Eulerian (every vertex has even degree) with the even connectivity was needed to be able to use the fact that the lifting graph has a disconnected complement in that case (\cite{thomassen2016orientations},\cite{ORT2016linkages}). Now we know from \cite{Assem2022lifting} more about the lifting graph when the degree of the vertex at which the lifting takes place is odd, so Eulerian graphs will not show up in our work, and we will only need a smaller edge-connectivity. However, we will need the condition of being $1$-ended.

Thomassen then generalized the result from locally-finite graphs to countable graphs. He proved Theorem 9 in \cite{thomassen2016orientations} about an operation called the splitting (of vertices) of an infinite countable graph such that one gets a graph of the same edge-connectivity with locally-finite blocks. After that he generalized, using Zorn's Lemma, to uncountable graphs. The method of generalization is explained in Section 8 of Thomassen's paper \cite{thomassen2016orientations}. One difficulty with this generalization in our case is that our result here is for $1$-ended graphs, and it is not guaranteed that the property of being $1$-ended is preserved after splitting vertices.

The result of this paper is further improved in \cite{Max2023orientation} in collaboration with Koloschin and Pitz, as we showed that the conjecture is true for locally-finite graphs with countably many ends.

\section{Lifting}

In this section we include the definitions and theorems we need on lifting. As mentioned in the introduction, the lifting graph is crucial in our proofs. Graphs in this paper are loopless multigraphs. If $s$ is a vertex in a finite graph $G$ and $sv$ and $sw$ are two edges incident with $s$, then to \textit{lift} the pair of edges $sv$ and $sw$ is to delete them and add the edge $vw$ if $v\neq w$ (but nothing is added if the two edges are parallel). 

The edges $sv$ and $sw$ are $k$-\textit{liftable}, or they form a $k$-\textit{liftable} pair, if after lifting them there are, in the resulting graph, $k$ pairwise edge-disjoint paths between any two vertices different from $s$. We simply say \textit{liftable} when connectivity is understood from the context. We also say that the edge $sv$ \textit{lifts} with $sw$. We say that $G$ is $(s,k)$-\emph{edge-connected} if between any two vertices different from $s$ in $G$ there are $k$ edge-disjoint paths (that may possibly go through $s$).

\begin{definition}(\cite{thomassen2016orientations}, \cite{ORT2016linkages}, \cite{Assem2022lifting})
Let $k$ be a positive integer and $s$ a vertex in a finite graph $G$ such that $G$ is $(s,k)$-edge-connected. The $k$-\textit{lifting graph} $L(G,s,k)$ is the finite graph whose vertex set is the set of the edges of $G$ incident with $s$ and in which two vertices are adjacent if they form a $k$-liftable pair of edges in $G$.
\end{definition}

\begin{notation}
For a set $X$ of vertices in a graph $G=(V,E)$, we write $\delta(X)$ to denote the set of edges with one end in $X$ and one end in $V\setminus X$.
\end{notation}

\begin{figure}
  \centering
  \begin{minipage}[b]{0.45\textwidth}
    \includegraphics[width=\textwidth]{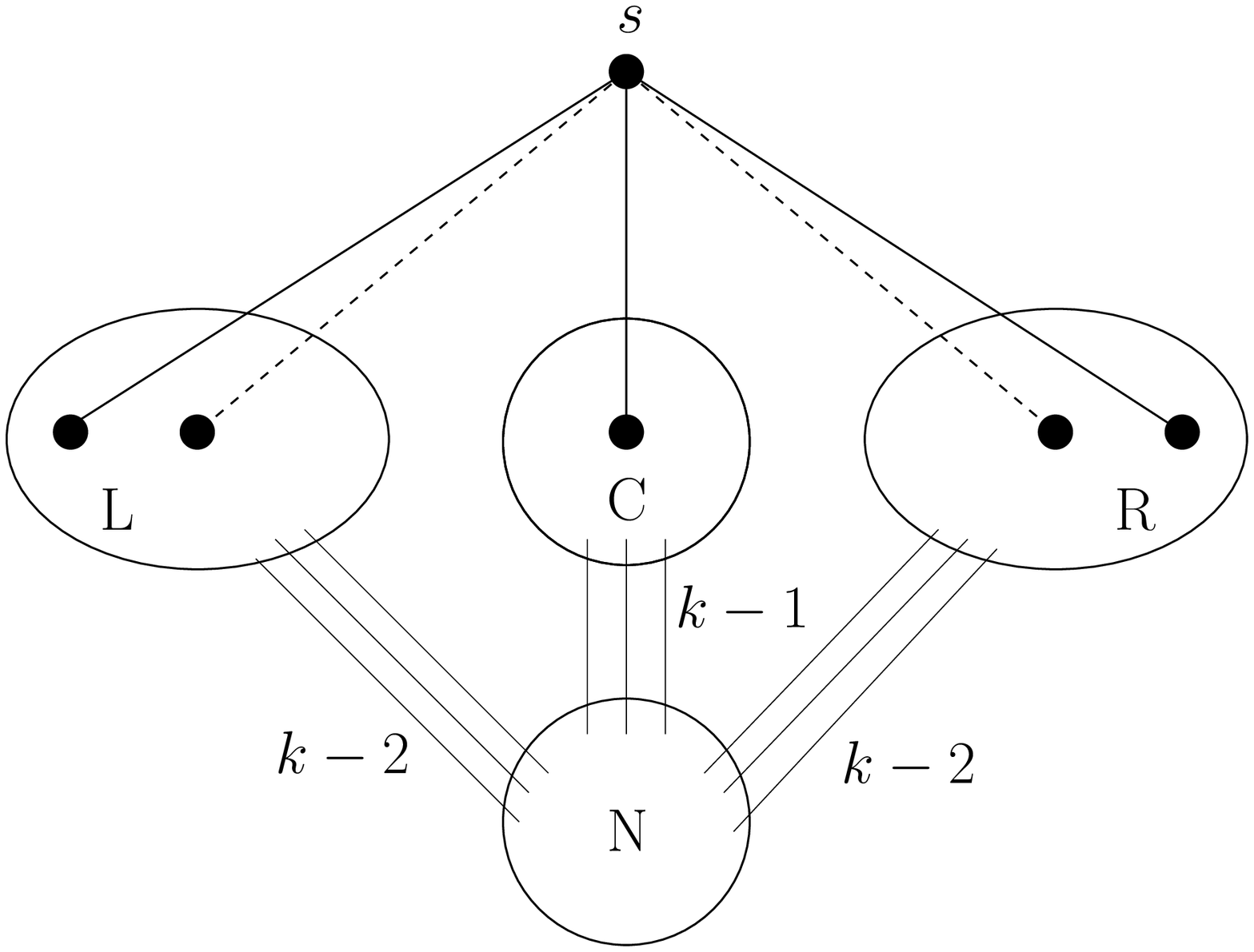}
  \end{minipage}
  
  \vspace{0.25in}
  \begin{minipage}[b]{0.45\textwidth}
    \includegraphics[width=\textwidth]{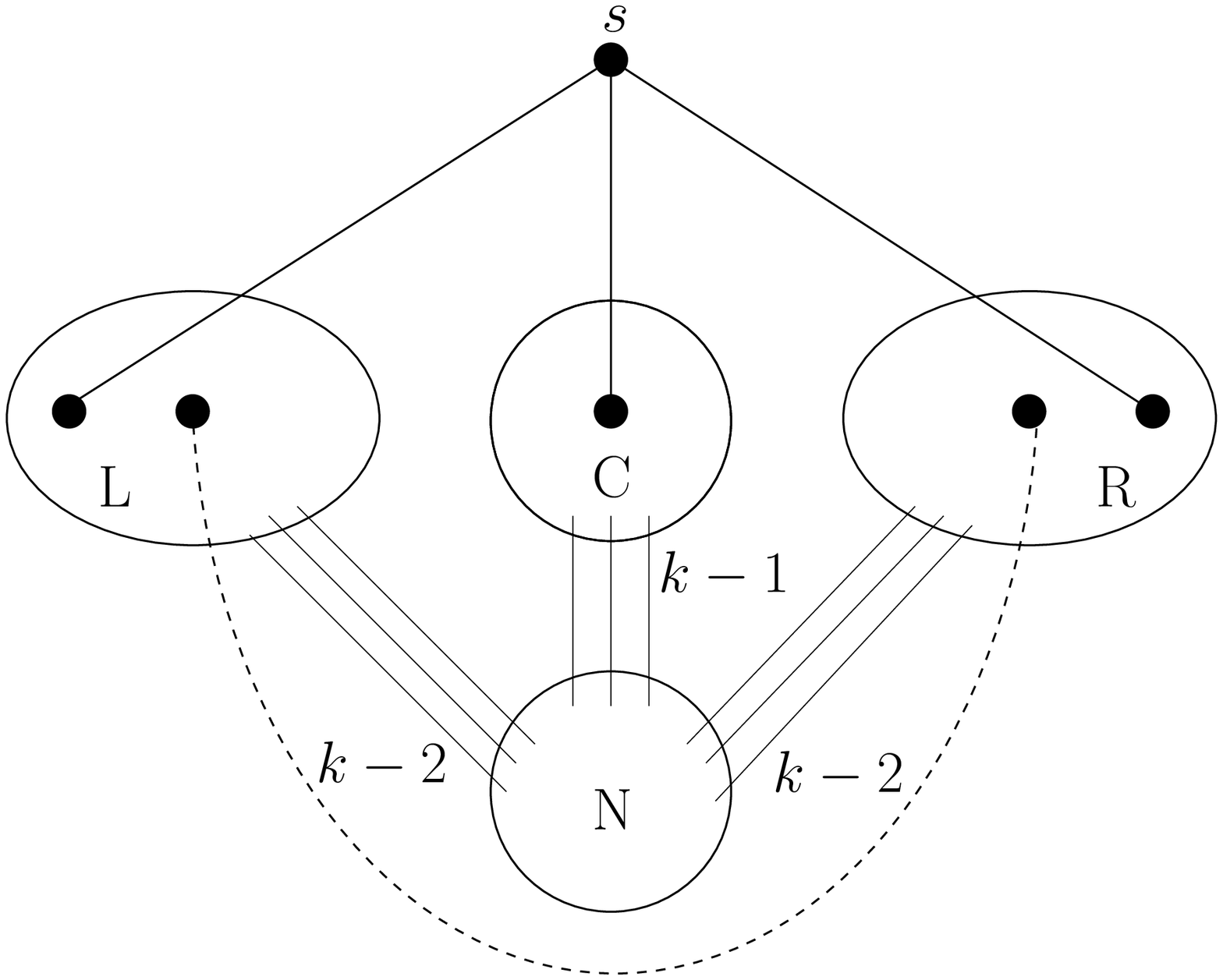}
    \end{minipage}
 \caption{Here $k$ may be even as well as odd. This is a graph with $deg(s)=5$ whose lifting graph $L(G,s,k)$ is the union of an isolated vertex (the middle edge incident with $s$ in the drawing) and $K_{2,2}$ (the two edges on the right and two edges on the left). If the dashed pair of edges is lifted, then any pair of the remaining three edges is not liftable since such an additional lift would result in the reduction of the size of one of the cuts $\delta(L\cup C \cup N)$ or $\delta(R\cup C \cup N)$ from $k+1$ to $k-1$.}
    \label{example 2 lifting from deg 5 to deg 3}
\end{figure}

As mentioned earlier, our proof of the main theorem relies on results concerning the lifting graph from \cite{Assem2022lifting}. In particular, combining point (ii) in \cite[Theorem~1.5]{Assem2022lifting} (for $\deg(s)>4$) and \cite[Proposition~3.4]{ORT2016linkages} (for $\deg(s)=4$) directly gives us the following theorem, which is also a special case of \cite[Theorem~3.3]{Max2023orientation} (when $A=V\setminus \{s\}$).

\begin{theorem}\label{main lifting theorem}
Let $G$ be an $(s,k)$-edge-connected graph such that $k\geq 2$ and $\deg(s)\geq 4$. If $k$ is even, then

\begin{itemize}

\item either the complement of $L(G,s,k)$ is disconnected, or 

\item $L(G,s,k)$ is the union of an isolated vertex and a balanced complete bipartite graph.

\end{itemize}
\end{theorem}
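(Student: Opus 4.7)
The plan is to reduce the statement to the two prior results already cited in the paragraph preceding it, via a simple case split on the degree of $s$. The hypotheses of the theorem match up cleanly with the union of their hypotheses, so the proof will amount to little more than a dispatch.

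First I would handle the generic range $\deg(s)\geq 5$. Here Theorem~1.5(ii) of \cite{Assem2022lifting} applies verbatim: the graph $G$ is $(s,k)$-edge-connected with $k$ even and $\deg(s)>4$, and the conclusion of that theorem is exactly the dichotomy stated here. So in this range nothing beyond citation is needed.

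Second I would dispose of the boundary case $\deg(s)=4$, which lies outside the hypotheses of Theorem~1.5 of \cite{Assem2022lifting}. Here Proposition~3.4 of \cite{ORT2016linkages} is tailored to the four-edge situation and, for $k$ even, yields that the complement of $L(G,s,k)$ is disconnected. It is worth noting in passing that the second alternative of the dichotomy cannot arise when $\deg(s)=4$: the union of an isolated vertex with a balanced complete bipartite graph $K_{n,n}$ has $2n+1$ vertices, which is always odd, whereas $L(G,s,k)$ has exactly four vertices. Hence the first alternative is forced in this case, in agreement with what Proposition~3.4 provides.

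The ``main obstacle'' here is in a sense invisible from the statement: it is entirely contained in the proofs of the two cited results, and in particular in the substantial argument behind Theorem~1.5 of \cite{Assem2022lifting}, which rests on a careful analysis of tight and dangerous sets around $s$ and of how they interact under lifting. The role of the present theorem is only to package those two results into a uniform dichotomy convenient for the orientation arguments to come, so the write-up will consist of little more than the two cases above together with the parity remark disposing of the exceptional alternative when $\deg(s)=4$.
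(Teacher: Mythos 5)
Your proposal is correct and takes essentially the same route as the paper, which likewise obtains the theorem by combining Theorem~1.5(ii) of \cite{Assem2022lifting} for $\deg(s)>4$ with Proposition~3.4 of \cite{ORT2016linkages} for $\deg(s)=4$. Your parity remark ruling out the second alternative when $\deg(s)=4$ is a correct and harmless addition.
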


In 2016 it was already known by Thomassen \cite[Theorem~2]{thomassen2016orientations} that the complement of $L(G,s,k)$ is disconnected if $G$ is Eulerian. In the same year, Ok, Richter, and Thomassen \cite[Theorem~1.2]{ORT2016linkages} showed that if both $\deg(s)$ and $k$ are even, then $L(G,s,k)$ is complete multipartite (so its complement is disconnected). The latter result was previously proved by Jord\'an in 1999 and published in 2003 \cite[Theorem~3.2]{Jordan} for applications in connectivity augmentation before being proved independently by Ok, Richter, and Thomassen for infinite graph applications. 

Although Ok, Richter, and Thomassen recognized a possible structure of an isolated vertex plus a complete multipartite graph when $\deg(s)$ is odd, they knew it was bipartite only when $\deg(s)=5$, and could not deal with the general case when $\deg(s)$ is odd nor when the complement of $L(G,s,k)$ is connected in applications. This required them to assume a higher edge-connectivity in their theorems. If $k$ is even and $\deg(s)$ is odd, then it is possible that the complement of $L(G,s,k)$ is connected as well as disconnected. If it is connected, then it is two cliques of the same size intersecting in one vertex, in other words, $L(G,s,k)$ is an isolated vertex plus a balanced complete bipartite graph.

To give the reader a better sense of how lifting works, we discuss briefly some of the cases in which $\deg(s)$ is small. Note that when $\deg(s)=2$ then the two edges incident with $s$ are liftable because if any path between two vertices different from $s$ goes through one of these two edges then it has to go through the other. The case when $\deg(s)=3$ is problematic and Section 7 of \cite{Assem2022lifting} is dedicated for studying it. 

For an example, Figure \ref{example 2 lifting from deg 5 to deg 3} depicts a graph $G$ with $\deg(s)=5$ whose lifting graph is the union of an isolated vertex and a $K_{2,2}$, as will be explained, and also illustrates the graph resulting from lifting a pair of edges incident with $s$. We assume that each of the sets $R$ and $L$ are internally well-connected. Note first that the set of edges incident with $s$ whose other end-vertices are in $R\cup C$ are pairwise non-liftable, i.e. they form an independent set in $L(G,s,k)$. This is the case because the cut $\delta(R\cup C \cup N)$ has size $k+1$ and so any lift of a pair of edges with end-vertices in $R\cup C$ results in the reduction of the size of this cut to $k-1$. As can be seen in the figure, the number of edges between $s$ and $R\cup C$ is $(\deg(s)+1)/2$ which is the maximum possible size of an independent set in $L(G,s,k)$ by Frank's theorem \cite[Theorem~B]{frank}. The same can be said about the set of edges between $s$ and $L\cup C$. Thus, the middle edge in the figure (between $s$ and $C$) is the isolated vertex in $L(G,s,k)$. Any edge of the two on the right side is liftable with any edge of the two on the left side if we assume that each of $R$ and $L$ has connectivity at least $k/2$ to avoid the emergence of small cuts after lifting. However, lifting any such liftable pair of edges results in a graph in which $\deg(s)=3$, and in this new graph any pair of edges incident with $s$ is not liftable because lifting will result in a cut of size $k-1$, and this is an example of a difficult situation with $\deg(s)=3$ as remarked earlier.

\section{Sequence of lifts in an infinite graph}

Here we present a technical lemma which is a cornerstone for the proof of the main theorem. In this lemma lifting is used to find edge-disjoint paths between certain pairs of edges in an infinite graph. These paths will be used in connecting the branch vertices of an immersion (to be defined in Section 4). The general argument is similar to the arguments in the proofs of Theorem 4 in \cite{thomassen2016orientations} by Thomassen and Theorem 1.3 in \cite{ORT2016linkages} by Ok, Richter, and Thomassen. We inductively pair as many as possible of the edges of a cut with one finite side such that certain properties are satisfied that reconcile the connectivity on the finite side with the topology on the infinite side. Recall that a \textit{ray} is a one-way infinite path, and an \textit{end} is an equivalence class of rays where two rays are in the same end (class) if there are infinitely many vertex disjoint paths between them. An infinite graph is \emph{locally-finite} if the degree of every vertex is finite. For locally-finite graphs, being $1$-ended (having only one end) is equivalent to the property that the deletion of any finite set of vertices results in only one infinite component.

\begin{lemma}\label{lifting sequence}
Let $k$ be a positive even integer and $G$ a $k$-edge-connected locally-finite graph. Suppose that there exists a finite set $S$ of vertices such that every edge of $\delta(S)$ is the first edge of a ray in an edge-disjoint finite collection of rays $\mathcal{R}$ in one end of $G$. Let $s$ be the contraction vertex in $G/(G-S)$.
If $\deg(s)$ is even, let $I:=\{0,\ldots, \deg(s)/2\}$, and if it is odd, let $I:=\{0, \ldots, (\deg(s)-3)/2\}$. Then there is a sequence $\{G_i\}_{i\in I}$ of finite graphs such that $G_0=G/(G-S)$, and for every $i\in I$, $i>0$, $G_i$ is obtained from $G_{i-1}$ by lifting a pair of edges $e_i$ and $e'_i$ incident with $s$ such that (see Figure \ref{ORT-proof}):

\begin{itemize}
\item [(1)] the pair $\{e_i,e_i'\}$ is $k$-liftable in $G_{i-1}$, and
\item [(2)] there is a path $P_i$ in $G-S$ joining the end-vertices of $e_i$ and $e_i'$ in $G-S$ such that $P_i$ is edge-disjoint from $\bigcup_{j<i} P_j$ and from all the rays in $\mathcal{R}$ that begin with edges in $\delta(S)\setminus\{e_j, e_j': j\leq i\}$.
\end{itemize}

Moreover, in case $\deg(s)$ is odd, if for some $i\in I$ the lifting graph $L(G_i,s,k)$ is the union of an isolated vertex and a balanced complete bipartite graph, then the lifting sequence and the paths $\{P_j\}_{j\in I}$ can be chosen so that for the three edges $e,e^*,e'$ of $\delta(S)$ that remain at the end of the sequence, there is a vertex $w\in G-S$ and edge-disjoint paths $W, W^*, W'$ from $w$ to respectively $e,e^*,e'$ that are also edge-disjoint from $\{P_j\}_{j\in I}$.
\end{lemma}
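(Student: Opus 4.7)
My plan is induction on $i$ with Theorem \ref{main lifting theorem} as the engine. At $i=0$, $G_0 = G/(G-S)$ is $(s,k)$-edge-connected because $G$ is $k$-edge-connected, so the base case holds trivially. For the inductive step, $G_{i-1}$ is $(s,k)$-edge-connected since every lift in the sequence is $k$-liftable, and $\deg_{G_{i-1}}(s) = \deg(s) - 2(i-1) \geq 4$ by the definition of $I$. So Theorem \ref{main lifting theorem} gives that either the complement of $L(G_{i-1},s,k)$ is disconnected, or $L(G_{i-1},s,k)$ is an isolated vertex plus a balanced complete bipartite graph (the latter forcing $\deg_{G_{i-1}}(s)$, and hence $\deg(s)$, to be odd).

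In the disconnected-complement case, let $A$ and $B$ be the two sides of the bipartition of $\delta_{G_{i-1}}(S)$ so that every pair $(a,b) \in A \times B$ is $k$-liftable. I would pick $e_i \in A$ and $e_i' \in B$ and construct $P_i$ as the concatenation of an initial segment of the ray $R_{e_i}$, a finite cross path, and an initial segment of $R_{e_i'}$ traversed in reverse. The use of $R_{e_i}$ and $R_{e_i'}$ is permitted because condition (2) only forbids rays whose starting edge is still unused at step $i$. The cross path must avoid the finite edge-set $\bigcup_{j<i} P_j$ and the (infinite) tails of all remaining rays. Since all rays of $\mathcal{R}$ lie in a single end of $G$, any two of them are joined by infinitely many vertex-disjoint paths; combined with local-finiteness, $k$-edge-connectivity, and an exhaustion of $G$ by finite sets containing $S$ together with the previous paths, I would show that a cross path exists inside a region far enough from $S$ that only short initial segments of the competing rays appear there, so their tails are avoided automatically. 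Varying the pair $(e_i,e_i')$ within $A \times B$ resolves any residual conflict with the finitely many previous-path edges.

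In the isolated-plus-bipartite case, $\deg_{G_{i-1}}(s)$ is odd and we enter the "Moreover" regime. I would still pick $e_i, e_i'$ across the bipartite part (which is $k$-liftable) and build $P_i$ by the same ray-based construction. To arrange for the concluding common vertex $w$, I would use the tight-cut structure underlying the isolated-plus-bipartite form of $L(G_i,s,k)$: the isolated vertex $e^*$ reflects a $(k+1)$-edge cut $\delta(X)$ around $s$ that forbids lifting $e^*$ with any other edge, and the balance of the bipartite part reflects the pair of symmetric $(k+1)$-edge cuts illustrated in Figure \ref{example 2 lifting from deg 5 to deg 3}. These cuts together localize the $G-S$-endpoints of the three surviving edges $e,e^*,e'$ to a common tight region of $G-S$; Menger's theorem applied inside $G-S$ after deleting the edges of $\{P_j\}$ then yields the vertex $w$ and the three edge-disjoint paths $W,W^*,W'$.

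The main obstacle is the cross-path construction in the disconnected-complement case, since it must evade the infinitely many edges contributed by the remaining rays. My plan for this is the compactness/exhaustion argument sketched above, leveraging the single-end assumption on $\mathcal{R}$ and local-finiteness to push the cross path into the region where only ray tails remain, where the abundance of vertex-disjoint connections between co-end rays supplies a valid detour. The "Moreover" case layers an extra subtlety on top --- guaranteeing that the lifting sequence and the final $w$-configuration are jointly realizable --- which I would handle by consistently biasing the bipartition-pairing choices so that the tight-cut witnesses behind the isolated-plus-bipartite structure persist to the last step and power the concluding Menger argument.
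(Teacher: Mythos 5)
Your skeleton (induction on $i$, with Theorem \ref{main lifting theorem} deciding the structure of $L(G_{i-1},s,k)$) matches the paper, but the core mechanism is missing and your substitute for it fails. The difficulty is not finding a liftable pair; it is finding a pair that is \emph{simultaneously} liftable and connectable in $G-S$ by a path edge-disjoint from all the still-unpaired rays. You propose to fix an arbitrary pair $e_i\in A$, $e_i'\in B$ and then build the cross path ``in a region far enough from $S$ that only short initial segments of the competing rays appear there, so their tails are avoided automatically.'' This is backwards: a region far from $S$ contains precisely the \emph{tails} of the competing rays (all rays converge to the one end), and those tails are exactly the edges condition (2) forbids. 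For a given liftable pair there may simply be no admissible connecting path. The paper resolves this by introducing a second auxiliary graph on $\delta_i(S)$, the \emph{end graph} $\mathcal{E}_i$, in which two edges are adjacent iff their rays are joined by infinitely many vertex-disjoint paths edge-disjoint from the \emph{other} rays of $\mathcal{R}_i$; this graph is connected (finitely many rays, one end), and a connected graph must share an edge with any graph whose complement is disconnected on the same vertex set. That shared edge is the pair $\{e_i,e_i'\}$, and it comes with the admissible path for free. Nothing in your proposal plays the role of this intersection argument, and ``varying the pair within $A\times B$'' only addresses the finite conflict with $\bigcup_{j<i}P_j$, not the conflict with the remaining rays.

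The ``Moreover'' case has a second gap. Your plan is to localize the endpoints of $e,e^*,e'$ via the tight cuts behind the isolated-plus-bipartite structure and then apply Menger inside $G-S$. But those cuts live on the finite side (in $G_{i-1}$), so they say nothing about where the three edges land in $G-S$; moreover $G-S$ need not retain any useful edge-connectivity after deleting $S$, so Menger there does not produce $w$ and three edge-disjoint paths avoiding $\bigcup P_j$. The paper instead splits on the end graph: either some liftable cross-pair of the bipartite part is already adjacent in $\mathcal{E}_i$ (proceed as before), or the isolated edge $e^*$ is a cut-vertex of $\mathcal{E}_i$ separating the two sides, in which case $w$ is taken \emph{on the ray $R^*$ itself}, with $W^*$ an initial segment of $R^*$ and $W,W'$ routed through the end-graph connections from $R^*$ to neighbouring rays $R,R'$. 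It then redefines the end graph on the remaining edges so that later connecting paths are allowed to cross $R,R^*,R'$ at higher levels, which is what keeps the induction alive once three rays have been committed. Without the end graph you have no handle on either the existence of $w$ or the consistency of the remaining lifts with the reserved triple.
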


\begin{figure}
\centering
   \includegraphics[scale=0.7]{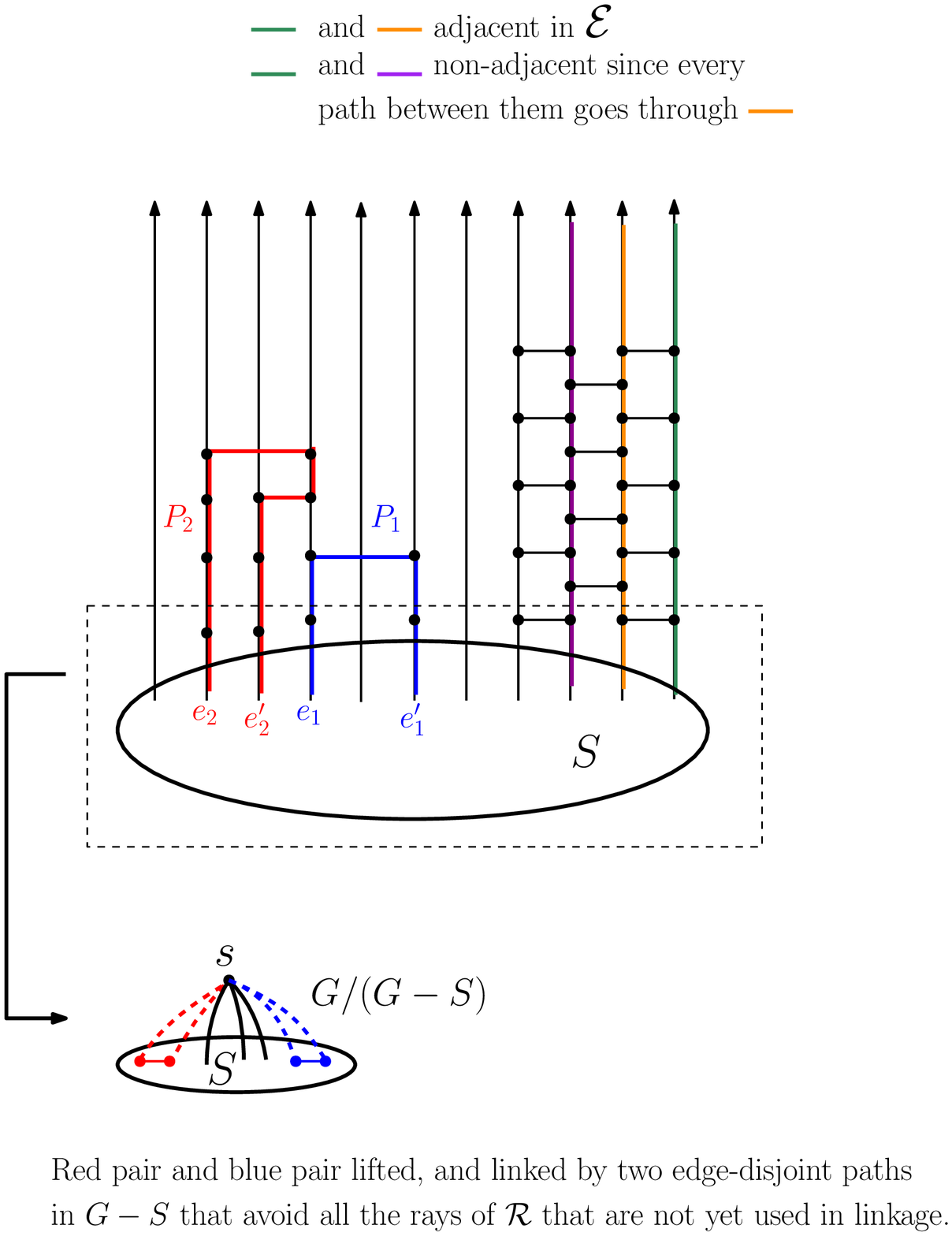} 
   \caption{Pairing the edges of $\delta(S)$ and linking them by edge-disjoint paths on the infinite side of the cut.}
    \label{ORT-proof}
\end{figure}

\begin{proof}
The proof follows almost the same general lines as in the proofs of Theorem 4 in \cite{thomassen2016orientations} by Thomassen and Theorem 1.3 in \cite{ORT2016linkages} by Ok, Richter, and Thomassen. The significantly new part here is dealing with the case when the lifting graph is the union of an isolated vertex and a balanced complete bipartite graph, and finding the vertex $w$ and the paths $W, W^*, W'$.

Note first that $\deg(s)\geq k \geq 2$. If $\deg(s)=2$, then the unique pair of edges is liftable and we can take any path to connect them in $G-S$. If $\deg(s)=3$, we do not lift anything but we will show later how to find the paths $W, W^*, W'$ as part of the general case for odd degree. So from here on, we may assume that $\deg(s)\geq 4$.

We inductively define the pairs of edges as follows. Let $i$ be at least $1$ and assume that $\{e_j,e_j'\}$, $G_j$, and $P_j$ are defined for all $j<i$. Let $\delta_i(S)$ denote the set $\delta(S)\setminus\{e_j, e_j' : j<i\}$ and $\mathcal{R}_i$ denote the set of rays in $\mathcal{R}$ that begin with edges in $\delta_i(S)$.

 We consider two graphs on the same vertex set $\delta_i(S)$. The first graph is the lifting graph $\mathcal{L}_i = L(G_{i-1},s,k)$, which encodes information about the edge-connectivity on the finite side of the cut $\delta(S)$. The second graph, to be defined below, encodes information about the topology on the infinite side of the cut.

The second graph on $\delta_i(S)$ is the \textit{end graph} $\mathcal{E}_i$. Distinct edges $e$ and $e'$ from $\delta_i(S)$ are adjacent in $\mathcal{E}_i$ if there are infinitely many vertex-disjoint paths in $G-S$ that:
       
       \begin{itemize}
           \item join the two rays in $\mathcal{R}_i$ beginning with $e$ and $e'$, and
           \item are edge-disjoint from all the other rays in $\mathcal{R}_i$.
       \end{itemize}

Note that the end graph $\mathcal{E}_i$ is connected because the rays of $\mathcal{R}$ are in one end and are only finitely many.

If the complement of $\mathcal{L}_i$ is disconnected, then since $\mathcal{E}_i$ is connected, there is a pair of elements $e_i$ and $e_i'$ from $\delta_i(S)$ that are adjacent in $\mathcal{E}_i$ but not adjacent in the complement of $\mathcal{L}_i$, and so adjacent in $\mathcal{L}_i$. This means that $e_i$ and $e_i'$ are liftable in $G_{i-1}$ and also there are infinitely many vertex disjoint paths between the two rays in $\mathcal{R}_i$ beginning with them and these paths are edge-disjoint from all the other rays in $\mathcal{R}_i$. 

Now assume that the complement of $\mathcal{L}_i$ is connected. Then by Theorem \ref{main lifting theorem}, since $k$ is even and $\deg(s)\geq 4$, it follows that $\mathcal{L}_i$ is the union of an isolated vertex and a balanced complete bipartite graph. Let $e^*$ denote the unique edge in $\delta_i(S)$ that is isolated in $\mathcal{L}_i$, and $R^*$ the unique ray in $\mathcal{R}$ that begins with it. We have the following two cases.

\begin{case}
    There are two edges $e_i$ and $e'_i$ from $\delta_i(S)$ that are on different sides of the complete balanced bipartite graph $\mathcal{L}_i-e^*$ and are adjacent in $\mathcal{E}_i$.
\end{case}

In this case, $e_i$ and $e'_i$ is the pair of edges we lift. Having determined, in every case considered so far, how to choose $e_i$ and $e_i'$ satisfying $(1)$ and $(2)$, now choose $Q_i$ to be one of the infinitely many vertex-disjoint paths between the two rays $R_i$ and $R_i'$ in $\mathcal{R}_i$ containing $e_i$ and $e_i'$ such that $Q_i$ is edge-disjoint from the finitely many edges of $\bigcup_{j<i} P_j$ and from the rays in $\mathcal{R}_i\setminus \{R_i,R_i'\}$. The desired path $P_i$ is the path that begins with the end-vertex of $e_i$ in $G-S$, goes along $R_i$ until it meets $Q_i$, then along $Q_i$, then along $R_i'$ down to $e_i'$. The remaining case needs a special treatment in finding these linking paths.

\begin{case} 
     $e^*$ is a cut-vertex in $\mathcal{E}_i$ between the two sides of $\mathcal{L}_i-e^*$.
\end{case}

Let $e$ and $e'$ be neighbours of $e^*$ on different sides of $\mathcal{L}_i-e^*$ and let $R$ and $R'$ be the rays of $\mathcal{R}_i$ that begin with them. Such neighbours of $e^*$ exist because $\mathcal{E}_i$ is connected. By definition of adjacency in $\mathcal{E}_i$ there are infinitely many vertex-disjoint paths between $R$ and $R^*$ that are edge-disjoint from the other rays in $\mathcal{R}_i$ and infinitely many vertex-disjoint paths between $R^*$ and $R'$ that are edge-disjoint from the other rays in $\mathcal{R}_i$. Pick a path $P$ between $R$ and $R^*$ that is edge-disjoint from the other rays in $\mathcal{R}_i$ and a path $P'$ between $R^*$ and $R'$ that is edge-disjoint from the other rays in $\mathcal{R}_i$ such that these two paths are also edge-disjoint from $\bigcup_{j<i}P_j$. Let the end-vertices of $P$ and $P'$ on $R^*$ respectively be $w$ and $w'$. Assume without loss of generality that $w$ is closer to $S$ on $R^*$ than $w'$, and let $Q$ be the path contained in $R^*$ between $w'$ and $w$. Let $W^*$ be the path contained in the ray $R^*$ from $w$ to $e^*$, let $W$ be the path from $w$ along $P$ and then down on $R$ to $e$, and let $W'$ be the path from $w$ up along $Q$ to $w'$ then along $P'$ then down on $R'$ to $e'$. Then $W$, $W^*$, and $W'$ are edge-disjoint from the rays in $\mathcal{R}_i\setminus \{R,R^*,R'\}$ and from $\bigcup_{j<i}P_j$.

The pair of edges, $e$ and $e'$ will not be lifted. The edges $e$, $e^*$, and $e'$ will remain to the end of the sequence of lifts. However, the pair of edges to be lifted at this step will be chosen as follows. Assuming $\deg_{G_{i-1}}(s)>3$, let $\mathcal{E}'_i$ be the end graph on $\delta_i(S)\setminus \{e,e^*,e'\}$, where two edges are adjacent if between the two rays beginning with them in $\mathcal{R}_i\setminus \{R,R^*,R'\}$ there are infinitely many vertex disjoint paths that are edge-disjoint from the other rays in $\mathcal{R}_i\setminus \{R,R^*,R'\}$. Note that this means that we no longer care if the connections between two rays go through the rays $R,R^*,R'$, and they have to go through some of them since $e^*$ is a cut-vertex in $\mathcal{E}_i$ and $e$ and $e'$ are direct neighbours of it.

The graph $\mathcal{E}'_i$ is a connected graph, and so there is a pair of edges $e_i$ and $e_i'$ in $\delta_i(S)\setminus \{e,e^*,e'\}$ from two different sides of the bipartite graph $\mathcal{L}_i-\{e,e^*,e'\}$, that is adjacent in $\mathcal{E}'_i$.
This means that this pair is separated in $\mathcal{E}_i$ only possibly by $e,e^*,e'$. This is the pair of edges to lift in this case. It is liftable since it is from two different sides of the complete bipartite part of the lifting graph.

Note that if a pair of edges is not liftable, then it remains so after lifting a liftable pair, so the isolated vertex remains isolated and the complement of the lifting graph remains connected through it. That is, the structure of an isolated vertex plus a balanced complete bipartite graph will be preserved, and $e^*$ remains the isolated vertex throughout the following inductive construction.

Inductively, for the $m$-th time after $i$, we lift a pair of edges $e_{j_m}$ and $e'_{j_m}$ on two different sides of the bipartite part of the lifting graph that are the first edges of the rays $R_m$ and $R_m'$ in $\mathcal{R}$. The infinitely many disjoint paths between $R_m$ and $R_m'$ may go through the edges of the rays $R_{m-1}, \ldots, R_1,R,R^*,R',R_1',\ldots, R'_{m-1}$. These are the rays of $\mathcal{R}$ beginning with the edges $e_{j_{m-1}}, \ldots, e_{j_1}, e,e^*,e',e_{j_1}',\ldots, e'_{j_{m-1}}$ respectively. We continue the lifting process until only $e,e^*,e'$ remain. We join $R_m$ and $R'_m$ by a path $Q_m$ that is edge-disjoint from the rays of $\mathcal{R}$ not yet used, and also is edge-disjoint from all the paths already used in connecting lifted pairs of edges, and from $\{W,W^*,W'\}$. However, this path may go through the edges of the rays $R_{m-1}, \ldots, R_1,R,R^*,R,R_1',\ldots, R'_{m-1}$ at a higher level from $S$ than any one of the previously constructed paths. This finishes the proof of Case 2.
\end{proof}

\section{Immersion}

\begin{par}
Here we find an immersion, in a $2k$-edge-connected locally-finite graph, of a $(2k-1)$-edge-connected finite graph. We only do this for $1$-ended graphs, unlike Theorem 4 of Thomassen in \cite{thomassen2016orientations}, which holds for graphs with multiple ends. We begin by setting out the definition of immersion we are going to use.
\end{par}

\begin{definition}\label{immersion}
For a graph $G$ (finite or infinite), let $\mathcal P(G)$ denote the set of paths in $G$.  An \emph{immersion} of a finite graph $H$ in $G$ consists of an injection $\phi:V(H)\to V(G)$ and a function $\theta:E(H)\to \mathcal P(G)$ such that, for $uv\in E(H)$, 
\begin{itemize}
\item [(1)] $\theta(uv)$ is a $\phi(u)\phi(v)$-path in $G$, 
\item [(2)] for every $v\in V(H)$, the vertex $\phi(v)$ is not an interior vertex of a path in $\theta(E(H))$, and
\item [(3)] the paths in $\theta(E(H))$ are pairwise edge-disjoint.
\end{itemize}
The graph $H$ is \emph{immersed} in $G$ and the subgraph $(\phi(V(H)),X)$ is an \emph{immersion of $H$ in $G$}, where $X$ is the set of all the edges in the paths in $\theta(E(H))$. The vertices of the set $\phi(V(H))$ are called the \textit{branch vertices} of the immersion.
\end{definition}

\begin{theorem}\label{immersion result}
Let $k$ be a positive integer, $G$ a $2k$-edge-connected locally-finite $1$-ended graph, and let $A$ be a finite set of vertices in $G$. Then $G$ contains an immersion $H$ of a finite $(2k-1)$-edge-connected graph $G'$ with vertex set $S\supseteq A$ such that any edge of $G$ with both end-vertices in $A$ is also an edge in $G'$ and in $H$.
\end{theorem}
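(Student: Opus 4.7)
The plan is to follow the general outline of Thomassen's proof of \cite[Theorem~4]{thomassen2016orientations}, replacing his Eulerian lifting step with an application of Lemma~\ref{lifting sequence}. First, using that $G$ is locally-finite and $1$-ended, I would enlarge $A$ to a finite connected set $S\supseteq A$ such that $G-S$ is a single infinite connected subgraph, absorbing the finitely many finite components of the complement. A compactness argument on nested finite exhaustions of $G$, together with $2k$-edge-connectivity, then produces a finite collection $\mathcal{R}$ of pairwise edge-disjoint rays in $G-S$, one starting at the non-$S$ endpoint of each edge of $\delta(S)$; all these rays lie in the unique end of $G$.

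Next I would apply Lemma~\ref{lifting sequence} to the contracted graph $G_0=G/(G-S)$ with parameter $2k$ (which is even). The lemma yields a sequence of $2k$-liftable pairs $(e_i,e_i')$ of edges of $\delta(S)$ together with pairwise edge-disjoint linking paths $P_i\subseteq G-S$. I would then define $G'$ on vertex set $S$ with edge set $E(G[S])$ together with one new edge between the $S$-endpoints of each lifted pair, and build the immersion $H$ by using each $G[S]$-edge as itself and each lifted-pair edge as the concatenation $e_i\cup P_i\cup e_i'$ in $G$. Because every lift preserves $(s,2k)$-edge-connectivity, the even-parity case (when $|\delta(S)|$ is even) exhausts all edges of $\delta(S)$ into pairs and $G'$ ends up $2k$-edge-connected, exceeding the required $(2k-1)$.

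The main obstacle is the odd-parity case, in which three edges $e,e^*,e'$ of $\delta(S)$ remain at $s$ after the lemma's sequence. Here I would invoke the ``moreover'' clause of Lemma~\ref{lifting sequence} to obtain a vertex $w\in G-S$ together with pairwise edge-disjoint $G-S$-paths $W,W^*,W'$ from $w$ to the non-$S$-endpoints of $e,e^*,e'$, all edge-disjoint from the earlier $P_i$'s. Concatenating $W$ reversed with $W'$ through $w$ (and shortcutting any repeated vertex) yields a linking path in $G-S$ for the pair $(e,e')$, edge-disjoint from all earlier $P_i$'s. The observation within the proof of Lemma~\ref{lifting sequence} that non-liftability is preserved under further liftings ensures that in the residual graph $G_m$ the pair $(e,e')$ is $2k$-liftable (it is the unique edge of the $K_{1,1}$-part of $L(G_m,s,2k)$), so I would perform one additional lift of $(e,e')$ using this linking path and then discard the remaining pendant edge $e^*$ from $G'$ and $H$. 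Removing a single edge shrinks every cut of the resulting graph by at most one, hence $G'$ is $(2k-1)$-edge-connected. Every edge of $G[A]\subseteq G[S]$ is automatically retained in both $G'$ and $H$, and the axioms of Definition~\ref{immersion} follow from the edge-disjointness of all the $P_i$'s and $W$-paths together with their containment in $G-S$, which guarantees that no vertex of $S$ is an interior point of any image path.
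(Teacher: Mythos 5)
Your overall strategy (choose $S$ with the ray system, run Lemma~\ref{lifting sequence} with parameter $2k$, and turn the lifted pairs plus linking paths into the immersion) is the paper's approach, and your even case is fine. The gap is in the odd case: you perform one additional lift of the pair $(e,e')$ at the stage where $\deg(s)=3$, justifying its $2k$-liftability by the observation that non-liftability is preserved under lifting. That observation points the wrong way: it guarantees that pairs which are non-adjacent in the lifting graph stay non-adjacent (so $e^*$ stays isolated), but it does not guarantee that the adjacent pair $(e,e')$ stays adjacent, and Theorem~\ref{main lifting theorem} requires $\deg(s)\geq 4$, so it says nothing about $L(G_m,s,2k)$ once only three edges remain. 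The paper's Figure~\ref{example 2 lifting from deg 5 to deg 3} is precisely a counterexample: a degree-$5$ vertex whose lifting graph is an isolated vertex plus $K_{2,2}$, where after one legitimate cross lift the three remaining edges at $s$ form an independent set in the lifting graph. If $(e,e')$ is not $2k$-liftable, lifting it creates a cut of size $2k-2$ separating two vertices of $S$ on the side not containing $s$, and that cut survives the deletion of $s$ and $e^*$; your $G'$ would then be only $(2k-2)$-edge-connected, which is not enough.

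The paper sidesteps this by not lifting the last pair at all: it stops with $\deg_{G^*}(s)\leq 3$ and sets $G'=G^*-s$, observing that at most one of any $2k$ edge-disjoint paths between two vertices of $S$ can pass through a vertex of degree at most $3$ — this is exactly where the loss from $2k$ to $2k-1$ comes from. In particular, the ``moreover'' clause of Lemma~\ref{lifting sequence} (the vertex $w$ and the paths $W,W^*,W'$) is not needed for this theorem at all. Your argument becomes correct, and matches the paper, if you replace the extra lift by simply deleting $s$ together with $e$, $e^*$, $e'$ and invoking the degree-$3$ argument for the $(2k-1)$-edge-connectivity of $G'$.
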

\begin{proof}

By a standard fact of infinite graphs, which is also a special case of Theorem 1 in \cite{thomassen2016orientations} by Thomassen for multiple ends, there is a finite set $S$ containing $A$ such that each edge of $\delta(S)$ is the first edge of a ray in an edge-disjoint collection of rays $\mathcal{R}$ (all in one, the unique, end of $G$). Let $s$ be the vertex that results from identifying all the vertices of $G-S$. Note that $\deg(s)=|\delta(S)|$. 

Since $2k$ is even, Lemma \ref{lifting sequence} applied with connectivity $2k$ implies there is a sequence of lifts of pairs of edges from $\delta(S)$ such that there is an edge-disjoint linkage of these pairs of edges in $G-S$. The sequence ends by having all the edges of $\delta(S)$ lifted if $|\delta(S)|$ is even or having three of them remaining if it is odd. Since the pairs of edges in the sequence were chosen to be $2k$-liftable, the last graph in the sequence satisfies that for any two vertices $x$ and $y$ in it different from $s$ there are $2k$ edge-disjoint paths between them. Denote this graph by $G^*$. Note that in any case, regardless of whether $|\delta_G(S)|$ is even or odd, $\deg_{G^*}(s)\leq 3$. Thus for any two vertices $x$ and $y$ in $G^*-s$ at most one of the edge-disjoint paths between them in $G^*$ goes through $s$. Thus $G':=G^*-s$ is a $(2k-1)$-edge-connected graph with vertex set $S$. 

Any edge in $G'$ is either an edge of $G$ or an edge that resulted from lifting, that is an edge corresponding to a path in $G$, and this collection of paths is edge-disjoint by construction and the paths have their internal vertices in $G-S$. This gives the desired immersion $H$. Moreover, any edge of $G$ with both end-vertices in $A$ remains in $G^*$, and subsequently $G'$, as the only lifted edges are edges with one end in $S \supseteq A$ and one end in $G-S$. Any edge with both end-vertices in $A$ is also an edge of $H$ as the only edges of $G'$ that are replaced with paths to obtain $H$ are those that resulted from lifting.

\end{proof}

Recall the example in Figure \ref{example 2 lifting from deg 5 to deg 3}, the lifting graph on $5$ edges incident with $s$ is an isolated vertex plus a $K_{2,2}$. There are only $4$ liftable pairs and they are all symmetric. Lifting any one of these liftable pairs gives a lifting graph on $3$ edges incident with $s$ such that no pair of edges is liftable. This means that in this case we cannot continue lifting until we have only one edge incident with $s$. If we could do so, then we would have the immersion of a $2k$-edge-connected graph instead of $2k-1$. 

In other words, connectivity of $(2k-1)$ is the best connectivity one can get for an immersed graph out of the proof approach of successive lifting in the case when the structure of an isolated vertex plus a $K_{2,2}$ shows up. This is also an obstacle to the multiple ends case, since among other issues, one may need to subtract $1$ for different ends which will result in a big reduction in connectivity.

The good news is, we do not need the immersed graph to be $2k$-edge-connected to be able to use it in proving the existence of a $2k$-arc-connected orientation for the infinite graph containing it as an immersion. In \cite{Max2023orientation}, Theorem 3.2, we included the vertex of degree $3$ in the set of branch vertices of the immersion. Having multiple ends meant having more than one such vertex of degree $3$. However, the immersion was constructed carefully such that there is at most only one such vertex in each component of a finite number of components. Still, the immersed graph had the property that between any two vertices in the set $A$ (as in the statement of the theorem above) there are $2k$ edge-disjoint paths.

\section{orientation of finite graphs}

In \cite{thomassen2016orientations} Thomassen presented the following algorithmic way for finding a $k$-arc-connected orientation of a $(4k-2)$-edge-connected finite graph. A path $P$ is \textit{mixed} if each edge in it has a direction, and \textit{directed} if all its edges are directed the same way, so being directed is a special case of being mixed. Similarly, a cycle is directed if all its edges are directed the same way.

 \begin{theorem}\label{algorithm of orientation}\cite{thomassen2016orientations}
 Let $k$ be a positive integer, and let $G$ be a finite $(4k-2)$-edge-connected graph. Successively perform either of the following operations:
 \begin{enumerate}
     \item [O1:] Select a cycle in which no edge has a direction and make it into a directed cycle.
     \item [O2:] Select two vertices $u, v$ joined by $2k-1$ pairwise edge-disjoint mixed paths, and identify $u,v$ into one vertex.
 \end{enumerate}
 When neither of these operations can be performed the resulting oriented graph has only one vertex. The edge-orientations of $G$ obtained by O1 result in a $k$-arc-connected directed graph.
 \end{theorem}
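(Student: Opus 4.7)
The plan is to verify three things: (i) the process terminates; (ii) the only halting configuration has a single vertex; and (iii) the resulting orientation of $G$ is $k$-arc-connected. Termination is immediate, since each O1 strictly reduces the number of unoriented edges and each O2 strictly reduces the vertex count. The basic structural fact used throughout is that vertex identification preserves edge-connectivity (cuts not separating the identified pair are unchanged, and cuts that do separate them disappear), so the current multigraph $H$ remains $(4k-2)$-edge-connected at every stage.

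For $k$-arc-connectivity I would use a cut-balance invariant: at every stage, for every cut $\delta_H(X)$ of the current multigraph $H$, the number of already-oriented arcs leaving $X$ equals the number entering. This holds because O1 orients a cycle of $H$ as a directed cycle, which crosses any cut of $H$ equally often in each direction, and O2 changes no orientations. Now fix a proper nonempty $X \subsetneq V(G)$, so $|\delta_G(X)| \geq 4k-2$. While no earlier O2 has identified two vertices straddling $X$, the set $X$ is a union of identified classes and $\delta_G(X)$ coincides with a cut of the current $H$, so the invariant applies. Every such $X$ must eventually be \emph{broken}, at the first O2 identifying some $u \in X$ with some $v \notin X$, since otherwise $X$ would survive as a nontrivial respected cut in the final single-vertex graph. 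At that moment, just before the O2, the $2k-1$ edge-disjoint mixed $u$-$v$ paths consist of already-oriented edges each crossing $\delta_G(X)$ an odd number of times, so they contribute at least $2k-1$ distinct oriented edges to $\delta_G(X)$. Combined with the invariant, the out- and in-arc counts across $\delta_G(X)$ are equal integers summing to at least $2k-1$, hence each is at least $k$. Since later operations only add new orientations without disturbing existing ones, these bounds persist to the end, yielding $k$-arc-connectivity by Menger's theorem.

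The main obstacle I anticipate is progress: showing that whenever $|V(H)| \geq 2$, one of O1 or O2 applies. If an unoriented cycle exists then O1 applies; the nontrivial case is when the unoriented edges form a simple forest $F$ and we must produce two vertices admitting $2k-1$ edge-disjoint paths in $H - F$. The plan is to combine the $(4k-2)$-edge-connectivity of $H$, the forest bound $|F| \leq |V(H)|-1$, and the balance property of the current orientation, which forces every cut of $H - F$ to have even size. Suppose for contradiction that the maximum pairwise edge-connectivity in $H-F$ is at most $2k-2$; then each edge of a Gomory-Hu tree of $H - F$ corresponds to a cut of $H-F$ of size at most $2k-2$, and the same cut viewed in $H$ must carry at least $2k$ forest edges. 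A double counting of forest edges across these Gomory-Hu cuts, together with $|F| \leq |V(H)|-1$, should yield a contradiction, essentially by forcing too many vertices to have $F$-degree at least $2k$, which a forest on $|V(H)|$ vertices cannot support. Carrying out this counting cleanly, particularly when $|V(H)|$ is large, is the most technical ingredient of the proof.
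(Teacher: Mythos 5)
Your argument for $k$-arc-connectivity is correct and takes a genuinely different route from the paper's. The paper (following Thomassen) proceeds by induction on the applications of O2, analysing the first identification; you instead fix a cut $\delta_G(X)$, observe that it stays a cut of the current multigraph until the first O2 straddling it (which must occur, else the final graph has two vertices), and at that moment the $2k-1$ edge-disjoint mixed paths put at least $2k-1$ oriented edges into $\delta_G(X)$, which by the O1 balance invariant splits as at least $k$ each way, a bound that persists. This is clean and, if anything, more transparent than the inductive sketch in the paper.

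The genuine gap is in the progress step, and the specific route you propose for closing it does not work. In your double count, each Gomory--Hu tree edge of $H-F$ yields a cut carrying at least $2k$ forest edges, so the sum over the $n-1$ tree edges of the number of forest edges crossing is at least $2k(n-1)$. But a single forest edge $xy$ crosses the Gomory--Hu cut of \emph{every} tree edge on the tree path from $x$ to $y$, which can have length up to $n-1$; so the left-hand side of your double count is only bounded above by $|F|\cdot(n-1)$, and $|F|\le n-1$ gives no contradiction. Nor does the count force vertices of large $F$-degree. The standard (and the paper's) way to finish is an edge count rather than a cut count: if every pair of vertices of $H-F$ has local edge-connectivity at most $2k-2$, then $|E(H-F)|\le (2k-2)(n-1)$ --- proved by splitting along a global minimum cut (of size at most $2k-2$) and recursing on the two sides. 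Together with $|F|\le n-1$ this gives $|E(H)|\le (2k-1)(n-1)$, while $(4k-2)$-edge-connectivity forces minimum degree at least $4k-2$ and hence $|E(H)|\ge (2k-1)n$, a contradiction. I recommend replacing the Gomory--Hu double count with this recursive min-cut argument; the rest of your proof then goes through.
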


For a detailed proof, we refer the reader to the proofs of Lemma 1 and Theorem 5 in Thomassen's paper \cite{thomassen2016orientations}. However, we give an idea below on how the proof goes so that the subsequent lemma makes sense. 

If we cannot perform operation $O1$, then the set of edges without direction forms a forest, and so there are at most $(n-1)$ of them if $n\geq 2$ is the number of vertices in $G$. 

If we cannot perform operation $O2$, then it can easily be shown that there are at most $(2k-2)(n-1)$ directed edges. Thus if we cannot perform any of the two operations, then $G$ has at most $(2k-1)(n-1)$ edges, contradicting the assumption that it is $(4k-2)$-edge-connected. Therefore, the only case in which none of the two operations can be performed is when the graph consists of one vertex. 

The directed graph obtained at the end of the algorithm can be shown to be $k$-arc-connected inductively as follows. First note that $O1$ gives directions to the edges of any cut in a balanced way. So if a graph can be completely oriented using only operation $O1$, then its resulting arc-connectivity is $(2k-4)/2=2k-1$. This covers the base case. 

If we look at the first application of $O2$, if any, then if $u$ and $v$ are the two vertices to which the operation is applied, there should be $k$ arc-disjoint directed paths from $u$ to $v$ and $k$ arc-disjoint directed paths from $v$ to $u$ as explained below. 

To see why this is the case, note that, since each cut is balanced, then if a cut has at most $(k-1)$ edges in one direction, then it has at most $(k-1)$ edges in the other direction. By Menger's theorem, this means that there are at most $(2k-2)$ mixed paths between $u$ and $v$, and so $O2$ cannot be applied to $u$ and $v$. Now the graph obtained from identifying $u$ and $v$ is a smaller graph and the oriented part of it can be thought of as oriented using only $O1$. Thus by the induction hypothesis the orientation it gets at the end from applying the algorithm to it is $k$-arc-connected.

The following lemma is a straightforward consequence of Theorem \ref{algorithm of orientation}. If a subgraph $H$ of $G$ is oriented using the algorithm in Theorem \ref{algorithm of orientation}, then when the orientation of $H$ is complete, we have a graph $G'$ obtained from $G$ by applying operations $O1$ and $O2$ to $H$, in which $H$ is represented by a vertex. 

Any edge in $G$ not in $H$ whose end-vertices are in $H$ is a loop in $G'$. If $G'$ has only one vertex, then $O1$ can be applied to those loops. If $G'$ has more than one vertex, we continue applying the algorithm until we have only one vertex.

 \begin{lemma}\label{extension of orientation}
 Let $k$ be a positive integer, and let $G$ be a finite $(4k-2)$-edge-connected graph. Let $H$ be a subgraph of $G$ with an orientation obtained using operations O1 and O2. Then the orientation of $H$ can be extended, using O1 and O2, to an orientation of $G$ which is $k$-arc-connected. \qed
 \end{lemma}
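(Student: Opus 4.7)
The plan is to reinterpret the given orientation of $H$ as a partial execution of the algorithm in Theorem \ref{algorithm of orientation} applied to $G$, and then continue that algorithm to completion. First I would observe that each O1 or O2 operation used to orient $H$ is also a valid operation on $G$: an O1 cycle inside $H$ is a cycle in $G$ whose edges are still undirected, since at that stage no edge of $E(G)\setminus E(H)$ has received a direction; and an O2 step on two vertices of $H$ is justified by $2k-1$ pairwise edge-disjoint mixed paths inside $H$, which are automatically $2k-1$ edge-disjoint mixed paths inside $G$. Thus the state of $G$ after orienting $H$ is a legitimate intermediate state of the algorithm from Theorem \ref{algorithm of orientation} applied to $G$ itself.

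Next I would continue the algorithm from this state. The key point is that the counting obstruction used in the proof of Theorem \ref{algorithm of orientation}, namely that the undirected edges form a forest of size at most $n-1$ and the directed edges contribute at most $(2k-2)(n-1)$, depends only on the current multigraph and not on the order in which operations were performed. The $(4k-2)$-edge-connectivity is inherited through each O2 identification once loops are discarded, so as long as $n\geq 2$ vertices remain, at least one of O1 or O2 is still applicable. Hence we may continue until only one vertex is left. Moreover, the pre-existing directions on edges of $H$ are never disturbed, because O1 only orients previously undirected edges and O2 only identifies vertices.

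Finally, the full sequence of operations, those originally used to orient $H$ together with those added in the extension, consists entirely of applications of O1 and O2 to $G$, so the conclusion of Theorem \ref{algorithm of orientation} directly yields that the resulting orientation of $G$ is $k$-arc-connected. The main thing to check, which is also as close as this argument gets to an obstacle, is that the proof of Theorem \ref{algorithm of orientation} is in effect memoryless: its counting step and its inductive verification of $k$-arc-connectivity use only the current edge directions and the balance produced by O1, not the history of how those directions were installed. Once this is noted, the lemma follows immediately from Theorem \ref{algorithm of orientation}.
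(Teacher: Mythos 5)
Your proposal is correct and follows essentially the same route as the paper: the paper also treats the orientation of $H$ as an initial segment of the algorithm of Theorem \ref{algorithm of orientation} run on $G$ (noting along the way that edges of $G$ not in $H$ with both ends in $H$ become loops after the identifications) and then continues the algorithm until one vertex remains. Your explicit check that the counting argument and the $k$-arc-connectivity verification are ``memoryless'' is exactly the point the paper leaves implicit.
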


 \section{new orientation result}

  Here we present our new orientation result for $1$-ended locally-finite graphs.
 
 \begin{theorem}\label{new orientation result}
 Let $k$ be a positive integer, and let $G$ be a $4k$-edge-connected locally-finite $1$-ended graph. Then $G$ has a $k$-arc-connected orientation.
 \end{theorem}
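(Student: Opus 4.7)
The plan is to exhaust $G$ by a nested sequence of finite immersions, orient each one using Theorem~\ref{algorithm of orientation}, and use Lemma~\ref{extension of orientation} to propagate the orientation from one stage to the next. Since $G$ is locally finite and (being $4k$-edge-connected with $k\geq 1$) connected, both $V(G)$ and $E(G)$ are countable; fix an enumeration $v_1,v_2,\ldots$ of $V(G)$. Set $A_1=\{v_1\}$ and, given a finite set $A_i$, apply Theorem~\ref{immersion result} with $k$ replaced by $2k$ (its hypothesis of $4k$-edge-connectivity is met) to obtain an immersion $H_i\subseteq G$ of a finite $(4k-1)$-edge-connected graph $G_i'$ on vertex set $S_i\supseteq A_i$ in which every edge of $G$ whose both end-vertices lie in $A_i$ appears as an original (non-lifted) edge. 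Put $A_{i+1}=V(H_i)\cup\{v_{i+1}\}$ and iterate; then $\bigcup_i A_i=V(G)$, every edge of $G$ eventually lies in some $H_i$, and the inclusion $V(H_i)\subseteq A_{i+1}$ guarantees that each edge of $H_i$ is an original edge of $G_{i+1}'$, so $H_i$ is a subgraph of $G_{i+1}'$.

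Now orient inductively. For the base case, apply Theorem~\ref{algorithm of orientation} to $G_1'$ (whose $(4k-1)$-edge-connectivity exceeds the required $4k-2$) to obtain, via a sequence of O1 and O2 operations, a $k$-arc-connected orientation of $G_1'$; expanding each oriented edge along its $\theta_1$-path in the corresponding direction yields an orientation $D_1$ of $H_1$. Suppose inductively that $D_i$ has been defined on $E(H_i)$. Transport along $\theta_i$ converts every O1 step used on $G_i'$ (orienting a cycle) into a sequence of O1 steps inside $G_{i+1}'$ that orient the edges of the corresponding $\theta_i$-expansion as dictated by $D_i$, since that expansion is a closed walk of distinct edges in $H_i\subseteq G_{i+1}'$ decomposing into edge-disjoint cycles whose directions can be chosen consistently. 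Likewise, every O2 step in $G_i'$, which identifies two vertices joined by $2k-1$ edge-disjoint mixed paths, transports to an O2 step in $G_{i+1}'$ because those mixed paths expand to $2k-1$ edge-disjoint mixed walks, and hence mixed paths after shortcutting, in $G_{i+1}'$. Consequently $D_i$ on $H_i\subseteq G_{i+1}'$ qualifies as an orientation obtained via O1 and O2 in $G_{i+1}'$, and Lemma~\ref{extension of orientation} extends it to a $k$-arc-connected orientation of $G_{i+1}'$, whose $\theta_{i+1}$-expansion yields $D_{i+1}$ on $H_{i+1}$ agreeing with $D_i$ on $E(H_i)$. Define $D=\bigcup_i D_i$.

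For $k$-arc-connectivity, take any $u,v\in V(G)$ and pick $i$ with $\{u,v\}\subseteq S_i$. The $k$-arc-connected orientation of $G_i'$ contains $k$ arc-disjoint directed $uv$-paths; expanding each arc along its $\theta_i$-image produces $k$ directed $uv$-walks in $(G,D)$, pairwise edge-disjoint because the paths $\theta_i(E(G_i'))$ are. Shortcutting delivers the required $k$ arc-disjoint directed $uv$-paths.

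The main obstacle is the compatibility clause in the induction, namely verifying that the propagated orientation $D_i$ on $H_i\subseteq G_{i+1}'$ fits the hypothesis of Lemma~\ref{extension of orientation}. Enlarging $A_{i+1}$ to contain all of $V(H_i)$ is precisely what ensures every edge of $H_i$ persists as an original edge of $G_{i+1}'$, and the cycle/mixed-path transport along $\theta_i$ outlined above lets the O1/O2 operations that produced $D_i$ on $G_i'$ be replayed inside $G_{i+1}'$. The $(4k-1)$-edge-connectivity delivered by Theorem~\ref{immersion result} comfortably beats the $(4k-2)$ threshold needed by Theorem~\ref{algorithm of orientation} and Lemma~\ref{extension of orientation}, so there is just enough slack to absorb the one unit of connectivity lost in the immersion.
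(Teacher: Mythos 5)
Your proposal is correct and follows essentially the same route as the paper: build a nested exhaustion by immersions of $(4k-1)$-edge-connected finite graphs via Theorem~\ref{immersion result}, extend the orientation at each stage with Lemma~\ref{extension of orientation}, transport it along the immersion paths, and take the union. The only differences are cosmetic (enumerating vertices rather than edges, starting from $G_1'$ instead of a seed cycle), and you are in fact more explicit than the paper about why the transported orientation of $H_i$ inside $G_{i+1}'$ still qualifies as one obtained by O1 and O2.
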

 \begin{proof}

   The proof is very similar to the proof of Theorem 7 in \cite{thomassen2016orientations}. It differs only in that it does not use Eulerian subgraphs. Since $G$ is locally-finite and connected, it is countable. Let $e_0,e_1,\cdots$ be the edges of $G$. We construct a nested sequence of finite directed subgraphs $\{W_n\}_{n\in \mathbb{N}}$ using operations $O1$ and $O2$ defined in the previous section such that each orientation is an extension of the previous, $W_n$ contains $e_n$, and has the following property: for any two vertices $x$ and $y$ in $V(W_n)$ there are $k$ arc-disjoint directed paths from $x$ to $y$ in $W_{n+1}$. 
   
   The graph $G$ has an edge-connectivity of $4k>1$, therefore it contains a cycle containing $e_0$. Using $O1$, give this cycle an orientation and let $W_0$ be this directed cycle. This defines the first subgraph in the sequence. Note that $W_0$ is not required to be $k$-arc-connected. 
   
   Assume that $W_n$ is defined. To get $W_{n+1}$, let $e_{i_n}$ be the first edge in our enumeration not contained in $W_n$, and let $A$ be the union of $V(W_n)$ and the two end-vertices of $e_{i_n}$. By Theorem \ref{immersion result}, $G$ contains an immersion $H_{n+1}$ of a finite $(4k-1)$-edge-connected graph $G_{n+1}$ such that $A\subseteq V(G_{n+1})$. Note that $V(G_{n+1})\subseteq V(H_{n+1}) \subseteq V(G)$ but $G_{n+1}$ is not necessarily a subgraph of $G$ (but $H_{n+1}$ is). Also, by Theorem \ref{immersion result}, $E(W_n)$ and $e_{i_n}$ are contained in $E(G_{n+1})$, and in $E(H_{n+1})$. In particular, $W_n$ is a subgraph of both $G_{n+1}$ and $H_{n+1}$. 
   
   The graph $W_n$ was oriented using $O1$ and $O2$ and is a subgraph of the $(4k-1)$-edge-connected graph $G_{n+1}$. Thus by Lemma \ref{extension of orientation}, this orientation can be extended, using $O1$ and $O2$, to a $k$-arc-connected orientation of $G_{n+1}$, so $e_{i_n}$ gets oriented as part of this.
   
   An orientation of $H_{n+1}$ can be naturally obtained from an orientation of $G_{n+1}$ by giving each path of the immersion the direction of the edge representing it in $G_{n+1}$. Note that the edge $e_{i_n}$ has the same orientation in $H_{n+1}$ as in $G_{n+1}$. We define $W_{n+1}$ to be the directed graph $H_{n+1}$.
   
   Now it only remains to show that for any two vertices $x$ and $y$ in $V(W_n)$ there are $k$ arc-disjoint directed paths from $x$ to $y$ in $W_{n+1}$. There are such paths from $x$ to $y$ in the oriented $G_{n+1}$. Replacing each edge of these paths with its image under $\theta$ (cf. Definition \ref{immersion}) gives $k$ arc-disjoint directed paths from $x$ to $y$ in $H_{n+1}$, i.e. in $W_{n+1}$ because the paths of $\theta(E(G_{n+1}))$ are edge-disjoint by the definition of immersion \ref{immersion}. 
   
   The union of the directed graphs $W_n$, $n\in \mathbb{N}$, defines an orientation of $G$. For any two vertices $x$ and $y$ of $G$, there exists $n\geq 1$ such that $x$ and $y$ are in $W_n$. To see this consider any path between $x$ and $y$ in $G$. For some sufficiently large $n$, $W_n$ contains all the edges of this path, and so also contains $x$ and $y$. Then there are $k$ arc-disjoint directed paths from $x$ to $y$ in $W_{n+1}$, and so in $G$. Since this is true for every $x$ and $y$ in $G$, the orientation of $G$ is $k$-arc-connected. 
   
   \end{proof}

\bibliographystyle{plain}
\bibliography{reference}
\end{document}